\def\Br{\Bbb R}
\def\benu{\begin{enumerate}}
\def\eenu{\end{enumerate}}
\def\id{\operatorname{id}}
\def\vol{\operatorname{vol}}
\newtheorem{theorem}{Theorem}
\newtheorem{lemma}[theorem]{Lemma}
\theoremstyle{definition}
\newtheorem{dfn}{Definition}
\newtheorem{rem}[dfn]{Remark}
\newtheorem{dfnrem}[dfn]{Definition and Remark}
\begin{document}

\author{Eva Leschinsky 
\\
\\
Fakult\"at f\"ur Mathematik, Ruhr-Universit\"at Bochum,\\
D-44780 Bochum, Germany
\\
\\}

\title{Minimal geodesics and topological entropy on $T^2$}
\date{\today} 
\maketitle

\begin{abstract}
Let $(T^2, g)$ be the two-dimensional Riemannian torus. In this paper we prove that the topological entropy of the geodesic flow restricted to the set of initial conditions of minimal geodesics vanishes, independent of the 
choice of the Riemannian metric. \\
\end{abstract}

Let $(T^2, g)$ be a two-dimensional Riemannian torus. The lift of a geodesic $c$ on the universal covering is called a minimal geodesic if it is a globally minimizing geodesic. A precise definition of the topological entropy will be given later. The geodesic flow is denoted by $\phi^t$ and a geodesic with initial condition $v \in ST^2$ is denoted by $c_v$. By construction it holds $\dot c_v(t)=\phi^t(v)$.
Our aim is to prove the following Main Theorem:\\

{\bf Main Theorem.} \label{min}
{\it Let $(T^2, g)$ be a two-dimensional torus with a Riemannian metric and $\Br^2$ its universal covering.
Let $\tilde S\Br^2$ be defined as follows:
$$\tilde S\Br^2:=\{v \in S\Br^2\, |\, c_v \text{ is a minimal geodesic } \}$$
and $\tilde ST^2:= d_p(\tilde S\Br^2)$, where $p:\Br^2 \to T^2$ is the covering map.
Let $\tilde \phi^t$ be the restriction of the geodesic flow $\phi^t$ on the unit tangent bundle of $T^2$ to $\tilde ST^2$. Then,
$$h_{top}(\tilde \phi^t)=0.$$}

\noindent
First we have to introduce some definitions and results for minimal geodesics on $T^2$.

\begin{dfn}
Let $(Y,d)$ be a compact metric space and let $\phi^t: Y \to Y$ be a continuous flow. Then, for given $T\geq 0$ the {\it dynamical distance function} is
defined as 
$$d(v,w)_{T}:= \max_{0 \leq t \leq T}d(\phi^tv, \phi^tw)$$
for all $v,w \in Y$.\\
Two distance functions $d_1$, $d_2$ on $Y$ are called {\it uniformly equivalent}, if $$\id:(Y, d_1) \to (Y, d_2) \quad \text{and} \quad \id:(Y,d_2) \to (Y,d_1)$$ are both uniformly continuous.
\end{dfn}

\begin{rem}[Distance functions on $\tilde ST^2$] \label{dist}
The Riemannian metric $g$ on $T^2$ induces the Sasaki metric on $ST^2$. By this we get a distance function $\tilde d$ on $\tilde ST^2$. 
Let $d$ denote the distance function on the universal covering $\Br^2$ of $T^2$ induced by $g$. Then $\bar d$ is another metric on $\tilde ST^2$ defined by 
$$\bar d(v,w):=\max_{t \in [0,1]} d (c_v(t), c_w(t)).$$
The distance functions $\tilde d$ and $\bar d$ are uniformly equivalent. \\
Then, for given $T \geq 0$ the dynamical distance functions on $\tilde ST^2$ with $v, w \in \tilde ST^2$ are
$$\tilde d(v,w)_{T}= \max_{0 \leq t \leq T} \tilde d(\phi^tv, \phi^tw)$$
and
$$\bar d(v,w)_{T} = \max_{0 \leq t \leq T+1} d (c_v(t), c_w(t)).$$
\end{rem}

\begin{dfn}[Topological entropy] \label{topent}
Let $(Y,d)$ be a compact metric space and let $\phi^t: Y \to Y$ be a continuous flow. We fix $\varepsilon>0$. \\
A subset $F\subset Y$ is called a $(\phi,\varepsilon)$-{\it separated} set of $Y$ with respect to $T$, if for $x_1 \not=x_2 \in F$ it holds $d(x_1,x_2)_{T}> \varepsilon.$ \\
A subset $G\subset Y$ is called a $(\phi,\varepsilon)$-{\it spanning} set of $Y$ with respect to $T$, if for each $x \in Y$ there exists a $y \in F$ with $d(x,y)_{T} \leq \varepsilon.$
The topological entropy of $\phi^t$ is defined by Bowen as 
$$
h_{top}(g) = h_{top}(\phi)=\lim_{\varepsilon \to 0} \limsup_{T \to \infty} \left(\frac{1}{T} \log r_T(\phi, \varepsilon)  \right)=  \lim_{\varepsilon \to 0} \limsup_{T \to \infty} \left(\frac{1}{T} \log s_T(\phi, \varepsilon)  \right).
$$
Here $r_T(\phi, \varepsilon)$ denotes the maximal cardinality of any $(\phi,\varepsilon)$-separated set and $s_T(\phi, \varepsilon)$ denotes the minimal cardinality of any $(\phi,\varepsilon)$-spanning set of $Y$ with respect to $T$.\\
When we restrict ourselves to a compact subset $K\subset Y$ and elect separated and spanning sets with respect to $K$, then we denote the topological entropy restricted to $K$ by $h_{top}(\phi, K)$. Obviously it holds $h_{top}(\phi, K) \leq h_{top}(\phi)$. For a later theorem we also need the expression
$$h_{top}(\phi, \varepsilon)=\limsup_{T \to \infty} \left(\frac{1}{T} \log r_T(\phi, \varepsilon)  \right)=  \limsup_{T \to \infty} \left(\frac{1}{T} \log s_T(\phi, \varepsilon)  \right).
$$
For more details see for example \cite{KH} or \cite{Wo}. 
\end{dfn}

\begin{rem}
As the topological entropy of a continuous flow is independent of the choice of uniformly equivalent distance functions, we consider the distance function $\bar d$ instead of $\tilde d$ on $\tilde ST^2$.
\end{rem}

\begin{dfnrem} \label{background}
Let $g$ be the induced Riemannian metric on the universal covering $\Br^2$ of $T^2$. We fix a Euclidian metric $g_E$ on $\Br^2$ which is automatically equivalent to $g$ and call it the {\it background metric}. There exists a constant $A>0$ such that
for the distance functions $d(\cdot, \cdot)$ and $d_E(\cdot, \cdot)$ induced by $g$ and $g_E$ on the universal covering $\Br^2$ it holds
$$\frac{1}{A} d_E(x,y) \leq d(x,y) \leq A d_E(x,y)$$
for all $x,y \in \Br^2$.
\end{dfnrem}

\begin{rem}[Minimal geodesics and Euclidian lines] \label{Hedlund}
Hedlund and Morse (see~\cite{Hedlund}) proved that there exists a constant $D>0$, such that for each minimal geodesic $c$ on the universal covering $\Br^2$ there exists a Euclidian line $l_c$, and for each Euclidian line $l_c$ there exists a minimal geodesic $c$ such that 
$$d (l_c, c(t)) \leq D, \quad \text{for all $t \in \Br.$}$$
The Euclidian slope of $l_c$ implies for each minimal geodesic $c$ a rotation number $ \alpha \in \Br \cup \{\infty\}$. Let $\mathcal{M}_{\alpha}$ denote the set of all minimal geodesics on $\Br^2$ with a fixed rotation number $\alpha$. \\
As shown by Bangert (see~\cite{B1988}), for irrational $\alpha$ the set $\mathcal{M}_{\alpha}$ is totally ordered, i.e. all minimal geodesics in $\mathcal{M}_{\alpha}$ have pairwise no intersections with each other. \\
For $\alpha$ rational ($\alpha = \infty$ is rational) let $\mathcal{M}_{\alpha} ^{\operatorname{per}}$ denote the set of periodic minimal geodesics with rotation number $\alpha$. Obviously, by the minimality of the geodesics two arbitrary elements in $\mathcal{M}_{\alpha} ^{\operatorname{per}}$ do not intersect. 
We distinguish two cases: \\
1) The periodic geodesics in a closed geodesic strip are foliated, i.e. for each point $x$ in the strip there exists a minimal geodesic $c$ with rotation number $\alpha$ and $c(0)=x$.\\
2) The periodic geodesics in a closed geodesics strip are not foliated, i.e. there exist gaps bounded by so called neighboring minimal geodesics.  \\
With respect to some introduced order on $\mathcal{M}_{\alpha} ^{\operatorname{per}}$ in $\Br^2$, let $x^-<x^+$ be two neighboring elements of $\mathcal{M}_{\alpha} ^{\operatorname{per}}$. Neighboring means then, that there exists no $x \in \mathcal{M}_{\alpha} ^{\operatorname{per}}$ with $x^-<x<x^+$. Let 
$$\mathcal{M}_{\alpha}^+(x^-, x^+):= \{x \in \mathcal{M}_{\alpha}\;|\;\text{$x$ is $\alpha$-asymptotic to $x^-$ and $\omega$-asymptotic to $x^+$}\},$$
$$\mathcal{M}_{\alpha}^-(x^-, x^+):= \{x \in \mathcal{M}_{\alpha}\;|\;\text{$x$ is $\omega$-asymptotic to $x^-$ and $\alpha$-asymptotic to $x^+$}\}.$$
The sets $\mathcal{M}_{\alpha}^+$, $\mathcal{M}_{\alpha}^-$ denote the unions of $\mathcal{M}_{\alpha}^+(x^-, x^+)$ and $\mathcal{M}_{\alpha}^-(x^-, x^+)$ extended over all pairs of neighboring elements.\\
Then, for rational $\alpha$ the set $\mathcal{M}_{\alpha}$ is the disjoint union of $\mathcal{M}_{\alpha} ^{\operatorname{per}}$, $\mathcal{M}_{\alpha}^+$ and $\mathcal{M}_{\alpha}^-$, and the sets $\mathcal{M}_{\alpha} ^{\operatorname{per}} \cup \mathcal{M}_{\alpha}^+$ and $\mathcal{M}_{\alpha} ^{\operatorname{per}} \cup \mathcal{M}_{\alpha}^-$ are ordered. For more details see~\cite{B1988}. 
\end{rem}

To prove the Main Theorem we have to introduce a construction on the universal covering. This construction goes back to Manning in \cite{Manning}. The central idea of the proof of the Main Theorem is then Theorem~\ref{Theo-Bowen} of R.~Bowen which allows us to bound $h_{top}(\tilde \phi)$ by the sum of the topological entropy of strips with finite width and the topological entropy of a single strip. \\

\noindent 
{\bf Central construction}\\
Let $\mathcal{F}$ be a fundamental domain in $\Br^2$ of diameter $a$, $\overline{\mathcal{F}}$ its closure and let $\varepsilon >0$ be a small number. We choose $r>0$ large with respect to the constants $a$ and $D$, and consider the set
$$\mathcal{F}_r:=\{z \in \Br^2 \, |\, r-a \leq d(z,\overline{\mathcal{F}}) \leq r\}. $$
For each $x \in \mathcal{F}$ the set $\mathcal{F}_r$ is obviously contained in the closed ball $\bar B(x,r+a)$ (see Figure~\ref{fig1}).\\
\begin{figure}[h]
\begin{center}
\psfrag{Fre}{$ \in \mathcal{F}_r^{\varepsilon}$}
\psfrag{Fe}{$\mathcal{F}^{\varepsilon}$}
\psfrag{Fr}{$\mathcal{F}_r$}
\psfrag{r}{$r+a$}
\psfrag{F}{$\mathcal{F}$}
\psfrag{B}{$B(x,r+a)$}
\psfrag{e}{$\frac{\varepsilon}{2}$}
\includegraphics[scale=0.7]{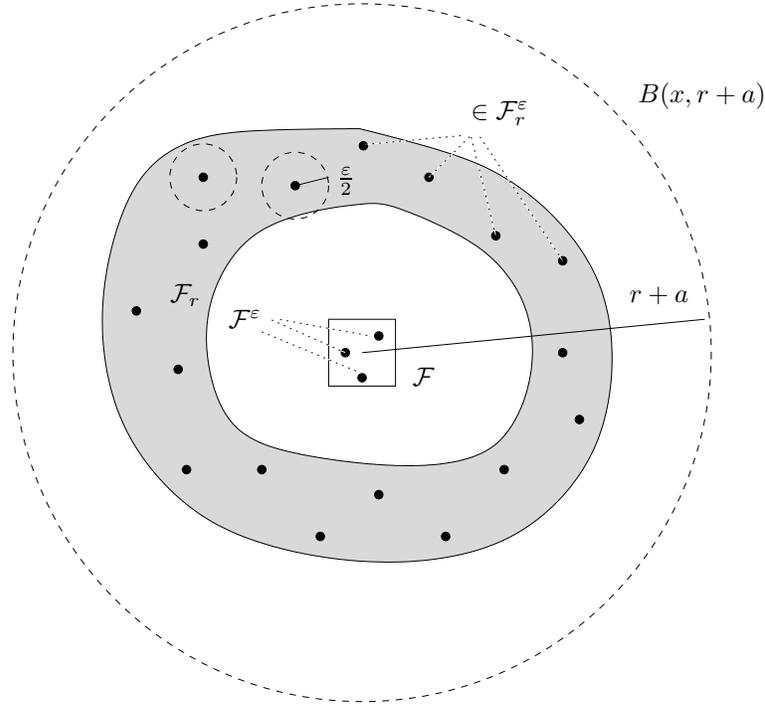}
\end{center}
\caption{\label{fig1} Illustration of the different separated sets.} 
\end{figure}
\noindent
Let $\mathcal{F}_r^{\varepsilon}$ be some $(d, \varepsilon)$-separated set of $\mathcal{F}_r$ with maximal cardinality.\\ 
Let $C_{\varepsilon}:=\inf\limits_{y \in \Br^2} \vol \left(B(y, \frac{\varepsilon}{2})\right) = \inf\limits_{y \in \mathcal{F}} \vol \left(B(y, \frac{\varepsilon}{2})\right)$. Then,
\begin{equation} \label{Ball}
\# \mathcal{F}_r^{\varepsilon} \cdot C_{\varepsilon} \leq \vol \left(B\left(x,r+a+\frac{\varepsilon}{2}\right)\right).
\end{equation}
Let $\mathcal{F}^{\varepsilon}$ be some $(d, \varepsilon)$-separated set of $\mathcal{F}$ with maximal cardinality. \\
We consider the flat background metric as introduced in Definition~\ref{background}. Obviously, geodesics on the universal covering with respect to $g_E$ are straight lines.
For $y \in\mathcal{F}^{\varepsilon}$ and $z \in \mathcal{F}_r^{\varepsilon}$ let $l_{yz}$ be a straight line, joining $y$ and $z$. As mentioned in Remark~\ref{Hedlund} there exists a minimal geodesic $c_{yz}$ for $l_{yz}$, and a constant $D>0$ such that $d(l_{yz},c_{yz}(t)) \leq D$ for all $t \in \Br$. We choose $v_{yz}$ as an initial vector of $c_{yz}$, such that $\pi(v_{yz}) \in \mathcal{F}$ and re-parametrize $c_{yz}$ such that $\dot c_{yz}(0)=v_{yz}$. Obviously $v_{yz} \in \tilde ST^2$.\\
Let $$P_r:=\{v_{yz} \in \tilde ST^2\, |\, y \in \mathcal{F}^{\varepsilon}, z \in \mathcal{F}_r^{\varepsilon}\}$$ be the set of initial conditions of minimal geodesics (parametrized by arc length) which have the same rotation number as the connecting straight lines between $y \in \mathcal{F}^{\varepsilon}$ and $z \in \mathcal{F}_r^{\varepsilon}$, and which are constructed as described above.

\begin{theorem} \label{vieleStreifen-vorab}
There exists a constant $\beta > 0$ independent of $r$, such that $P_r$ is a $(r, \beta)$-spanning set of $\tilde ST^2$ with respect to $\tilde \phi^t$ and the metric $\bar d$, i.e. for each $w \in \tilde ST^2$ there exists $v \in P_r$ with $\bar d(v,w)_{\tilde \phi,r} \leq \beta$. 
\end{theorem}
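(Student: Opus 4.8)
The plan is to fix an arbitrary $w \in \tilde ST^2$ and exhibit a concrete element of $P_r$ whose geodesic fellow-travels $c_w$ on $[0,r+1]$ with a gap bounded independently of $r$. Lift $w$ to $S\Br^2$ so that its footpoint $x_0 := c_w(0)$ lies in $\mathcal{F}$ (possible since $\mathcal{F}$ is a fundamental domain); $\bar d(v,w)_{\tilde\phi,r}$ is computed with this lift. Since $c_w$ is a minimal geodesic parametrized by arc length and $x_0 \in \overline{\mathcal{F}}$, minimality gives $r-a \le d(c_w(r), \overline{\mathcal{F}}) \le d(c_w(r), x_0) = r$, so $Z := c_w(r) \in \mathcal{F}_r$. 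Because $\mathcal{F}^{\varepsilon}$ and $\mathcal{F}_r^{\varepsilon}$ are \emph{maximal} $(d,\varepsilon)$-separated sets they are $\varepsilon$-spanning, so there exist $y \in \mathcal{F}^{\varepsilon}$ with $d(x_0,y) \le \varepsilon$ and $z \in \mathcal{F}_r^{\varepsilon}$ with $d(Z,z) \le \varepsilon$. Orienting $v_{yz}$ so that $c_{yz}$ runs from near $y$ towards $z$, the candidate is $v_{yz} \in P_r$, and it remains to bound $\max_{0 \le t \le r+1} d(c_w(t), c_{yz}(t))$. It suffices to treat $t \in [0,r]$: on $[r,r+1]$ unit speed adds at most $2$ to the bound at $t=r$.

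The estimate splits into a transverse and a longitudinal part. For the transverse part I would pass to the background metric $g_E$ of Definition~\ref{background} and use the line $l_w$ of Remark~\ref{Hedlund} as a coordinate axis. Both geodesics stay within $g$-distance $D$, hence within $g_E$-distance $AD$, of their associated lines $l_w$ and $l_{yz}$. The endpoints $y,z$ of the segment of $l_{yz}$ lie within Euclidean distance $A(D+\varepsilon)$ of $l_w$, since $x_0,Z$ do and $y,z$ are $\varepsilon$-close to them; as the signed distance to a line is an affine function along a segment, its absolute value is maximized at the endpoints, so the whole segment $[y,z]$ stays within $A(D+\varepsilon)$ of $l_w$. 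Consequently $c_{yz}(t)$ lies in a strip about $l_w$ whose Euclidean half-width $M$ depends only on $A,D,\varepsilon$ and not on $r$; the role of $z$ here is precisely to force $l_{yz}$ to point in the direction of $l_w$.

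The main obstacle is the longitudinal part, i.e.\ synchronizing the two arc-length parametrizations, since nearby transverse coordinates alone do not pin the position along $l_w$. Here I would exploit the matched footpoints together with minimality. Because $c_{yz}(0)$ and $x_0$ both lie in $\mathcal{F}$, we have $d(x_0, c_{yz}(0)) \le a$, and since $c_{yz}$ is minimal, $d(x_0, c_{yz}(t)) = t + O(a)$. Now $c_w$ is a Euclidean quasigeodesic whose longitudinal coordinate is coarsely monotone and sweeps out the full span from $x_0$ to $Z$ while remaining in the strip; therefore any point of the strip (in particular $c_{yz}(t)$ for $t \in [0,r]$) lies within an $r$-independent distance of the \emph{curve} $c_w$, say $d(c_{yz}(t), c_w(\sigma)) = O(1)$ for some parameter $\sigma$. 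Comparing distances to $x_0$ gives $|\sigma - t| = |d(x_0, c_w(\sigma)) - d(x_0, c_{yz}(t))| = O(1)$, and hence $d(c_{yz}(t), c_w(t)) \le d(c_{yz}(t), c_w(\sigma)) + |\sigma - t| = O(1)$. I expect the delicate point to be making the quasigeodesic and coarse-monotonicity estimates quantitative and uniform in $r$, and checking that the longitudinal coordinate of $c_{yz}(t)$ stays inside the span $[x_0,Z]$ for $t \in [0,r]$ (which uses the forward orientation of $v_{yz}$ fixed by $z$).

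Combining the two estimates and converting back from $g_E$ to $g$ via the equivalence of Definition~\ref{background} yields a constant $\beta$, depending only on $a,D,A,\varepsilon$, with $d(c_w(t), c_{yz}(t)) \le \beta$ for all $t \in [0,r]$, and thus $\bar d(v_{yz}, w)_{\tilde\phi, r} \le \beta + 2$ after accounting for $[r,r+1]$. Since $w \in \tilde ST^2$ was arbitrary and $v_{yz} \in P_r$, with the constant independent of $r$, this is exactly the asserted $(r,\beta)$-spanning property (up to renaming $\beta+2$ as $\beta$).
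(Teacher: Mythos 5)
Your proposal is correct and follows essentially the same route as the paper: the same candidate $v_{yz}$ obtained from the maximal separated sets, a transverse bound coming from the convexity/affineness of the Euclidean distance between the two accompanying lines (the content of the paper's Lemma~\ref{GeradenLemma}), and a longitudinal synchronization of the arc-length parameters by comparing distances to the common starting region via minimality (the paper's $|t-\tilde t|\le 2D+B+a$ estimate). The "delicate points" you flag are exactly the ones the paper settles with Lemma~\ref{p} (monotonicity of the projections onto the lines) and the endpoint estimates (\ref{-2D}) and (\ref{r+1+2D}), so nothing essential is missing.
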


\begin{lemma} \label{vieleStreifen}
$h_{top}(\tilde \phi_t, \beta)=0$.
\end{lemma}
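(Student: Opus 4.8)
The plan is to bound the minimal cardinality of a $(\tilde\phi,\beta)$-spanning set of $\tilde ST^2$ by a polynomial in the time parameter, so that the exponential growth rate defining $h_{top}(\tilde\phi,\beta)$ must vanish. Theorem~\ref{vieleStreifen-vorab} provides, for every large $r>0$, the $(r,\beta)$-spanning set $P_r$, and hence $s_r(\tilde\phi,\beta)\leq \# P_r$. Since every element of $P_r$ has the form $v_{yz}$ with $y\in\mathcal{F}^{\varepsilon}$ and $z\in\mathcal{F}_r^{\varepsilon}$, one immediately gets $\# P_r\leq \#\mathcal{F}^{\varepsilon}\cdot\#\mathcal{F}_r^{\varepsilon}$. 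The factor $\#\mathcal{F}^{\varepsilon}$ is a constant independent of $r$, because $\mathcal{F}$ is a fixed bounded fundamental domain.

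First I would estimate $\#\mathcal{F}_r^{\varepsilon}$. Inequality~\eqref{Ball} already gives $\#\mathcal{F}_r^{\varepsilon}\leq C_{\varepsilon}^{-1}\,\vol\!\left(B\!\left(x,r+a+\tfrac{\varepsilon}{2}\right)\right)$, so it remains to control the volume growth of metric balls on the universal covering. Using the background metric of Definition and Remark~\ref{background}, the equivalence $\tfrac{1}{A}d_E\leq d\leq A\,d_E$ shows that the $d$-ball $B(x,R)$ is contained in the Euclidian ball $B_E(x,AR)$; and since $g$ and $g_E$ are equivalent as metric tensors, the $g$-volume form is comparable to the Euclidian area. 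As Euclidian area in $\Br^2$ grows quadratically, this yields a constant $\kappa>0$, uniform in the center $x$, with $\vol(B(x,R))\leq \kappa R^2$ for all large $R$. Consequently $\#\mathcal{F}_r^{\varepsilon}\leq \kappa\,C_{\varepsilon}^{-1}\left(r+a+\tfrac{\varepsilon}{2}\right)^2$, which is quadratic in $r$.

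Combining these estimates, there is a constant $\gamma>0$, independent of $r$, with $s_r(\tilde\phi,\beta)\leq \# P_r\leq \gamma\, r^2$ for all large $r$. Inserting this into the definition of $h_{top}(\tilde\phi,\beta)$ and letting $r$ play the role of the time $T$, one obtains
$$
h_{top}(\tilde\phi,\beta)=\limsup_{r\to\infty}\frac{1}{r}\log s_r(\tilde\phi,\beta)\leq \limsup_{r\to\infty}\frac{\log\gamma+2\log r}{r}=0 .
$$
Since topological entropy is always nonnegative, equality follows.

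I expect the main obstacle to be the volume-growth estimate: one must pass carefully from the distance equivalence of Definition and Remark~\ref{background} to an honest comparison of Riemannian volumes, and ensure that the resulting quadratic bound is uniform in the center $x$. The compactness of $T^2$ together with the periodicity of the lifted metric $g$ guarantees this uniformity. Everything else is bookkeeping on cardinalities and the elementary fact that polynomial growth has vanishing exponential rate.
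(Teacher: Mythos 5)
Your proposal is correct and follows essentially the same route as the paper: bound $s_r(\tilde\phi,\beta)$ by $\#P_r\leq\#\mathcal{F}^{\varepsilon}\cdot\#\mathcal{F}_r^{\varepsilon}$ via Theorem~\ref{vieleStreifen-vorab}, control $\#\mathcal{F}_r^{\varepsilon}$ by the packing inequality~(\ref{Ball}), and then use the comparison with the Euclidian background metric to get quadratic volume growth of balls, whence the exponential growth rate vanishes. The paper's proof is exactly this chain of estimates, including the passage $\vol(B(x,\tilde r))\leq\vol(B_E(x,A\tilde r))\leq A^{4}\tilde r^{2}$ (up to a constant), so no further comparison is needed.
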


\begin{lemma}\label{p}
Let $c: \Br \to \Br^2$ be a geodesic and $l$ a Euclidian straight line on $\Br^2$ endowed with an order. Then any map $p:c(\Br) \to l$ with
$$d(c(t), p(c(t))=d(c(t), l)$$
is an injective and strictly monotone function.
\end{lemma}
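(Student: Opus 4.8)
The plan is to use two geometric facts about the foot points $p(c(t))$: they are realized by minimizing $d$-geodesic segments meeting $l$ orthogonally, and two such segments cannot cross transversally. Here $c$ is a minimal geodesic and $l=l_c$ its associated Euclidian line, so that $c$ stays within bounded distance of $l$ as in Remark~\ref{Hedlund} (this is essential: for a geodesic perpendicular to $l$ the statement already fails, so one must use that $c$ is \emph{roughly parallel} to $l$). I record first that for each $t$ the point $q_t:=p(c(t))$ is a nearest point of $l$ to $c(t)$, and I let $\sigma_t$ be a minimizing segment joining $c(t)$ to $q_t$, of length $d(c(t),l)$; by the first variation of arc length $\sigma_t$ meets $l$ orthogonally at $q_t$.

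The heart of the argument is a no-crossing property: for $t\neq t'$ the interiors of $\sigma_t$ and $\sigma_{t'}$ do not meet transversally. Suppose they met at $x$, and write $a=c(t)$, $a'=c(t')$, $b=q_t$, $b'=q_{t'}$. Minimality gives $d(a,b)=d(a,x)+d(x,b)$ and $d(a',b')=d(a',x)+d(x,b')$. Cutting at $x$ and re-gluing produces paths from $a$ to $b'$ and from $a'$ to $b$ with a genuine corner at $x$, so
$$d(a,b')+d(a',b) < \big(d(a,x)+d(x,b')\big)+\big(d(a',x)+d(x,b)\big) = d(a,b)+d(a',b').$$
Since $b,b'\in l$ are feet, $d(a,b)=d(a,l)\le d(a,b')$ and $d(a',b')=d(a',l)\le d(a',b)$, hence $d(a,b)+d(a',b')\le d(a,b')+d(a',b)$, contradicting the strict inequality. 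Thus distinct connecting segments are disjoint or share an endpoint.

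From non-crossing, strict monotonicity follows by a planar argument: the family $\{\sigma_t\}$ consists of pairwise non-crossing arcs joining the connected curve $c(\Br)$ to the ordered line $l$, so it must carry the order of $c(\Br)$ (induced by $t$) to the order of $l$ in an orientation-consistent way. A failure of strict monotonicity comes in two forms, and each produces a forbidden configuration. An order reversal for some pair $t<t'$ forces $\sigma_t$ and $\sigma_{t'}$ to cross transversally, which the swap estimate excludes. A coincidence $q_t=q_{t'}$ with $t\neq t'$ forces both $\sigma_t$ and $\sigma_{t'}$ to leave the common foot orthogonally to $l$, hence to lie on the unique geodesic through that foot perpendicular to $l$; then $c$ meets this perpendicular in two distinct points, and since $c$ is minimal while the perpendicular subarc between them is minimizing, one obtains two distinct minimizing connections between these points, excluded in the bounded-distance setting. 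The monotone map is therefore injective, proving the claim.

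I expect the main obstacle to be the passage from non-crossing to genuine monotonicity, rather than the swap estimate, which is the decisive but routine computation. The delicacy is that $p$ is only assumed to be \emph{some} selection of nearest points and need not be continuous, so the monotonicity cannot be read off from any regularity of $p$; it is the non-crossing structure of the orthogonal connectors $\sigma_t$, together with the fact that $c$ remains within bounded distance of $l$, that must be made to carry the argument, and turning an order reversal into an honest transversal intersection is the step that requires care.
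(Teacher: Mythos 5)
Your proposal is correct and follows essentially the same route as the paper: orthogonality of the connecting segments at their feet, a cut-and-reglue (swap) estimate showing two such segments cannot cross transversally, and the observation that a common foot forces both segments onto the perpendicular geodesic, which the minimal geodesic $c$ then meets twice --- a contradiction. The only point where you are looser than the paper is the reduction of non-monotonicity to a crossing: a single ``order-reversed'' pair $t<t'$ does not by itself force $\sigma_t$ and $\sigma_{t'}$ to intersect (a strictly decreasing $p$ would still be monotone), and the paper instead exhibits a three-point configuration $t_1<t_2<t_3$ with $p(c(t_1))<p(c(t_3))<p(c(t_2))$, from which two of the three connecting segments must cross.
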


\begin{proof}
Assume $p$ is not injective, then there exist $t_1 \not= t_2 \in \Br$ with $c(t_1) \not= c(t_2)$ and $x:=p(c(t_1))=p(c(t_2))$. Let $\gamma_1:[0,a] \to \Br^2$ and $\gamma_2:[0,b] \to \Br^2$ denote the minimal geodesic segments connecting $c(t_1)$ with $p(c(t_1))$ and $c(t_2)$ with $p(c(t_2))$, respectively. $\dot \gamma_1(a)$ and $\dot \gamma_2(b)$ are both orthogonal to $l$ in $x$, otherwise the distance of $c(t_1)$ or $c(t_2)$ to $l$ could be shortened. But then $\dot \gamma_1(a)=\dot \gamma_2(b)$ and this implies that $\gamma_1(\Br)=\gamma_2(\Br)$. If $a=b$ then $\gamma_1(0)=\gamma_2(0)$, in contradiction to $c(t_1) \not=c(t_2)$. Let $a\not=b$ and w.l.o.g.\ let $\gamma_1(0)=\gamma_2(b')$ for $0 < b' <b$. But then $\gamma_2([0,b])$ and $c(\Br)$ intersect twice, in contradiction to their minimality. Hence, $p$ is injective.\\

\noindent
Assume $p$ is not monotone, then there exist $t_1<t_2<t_3$ such that $p(c(t_1))<p(c(t_3))<p(c(t_2))$. But then at least two of the connecting segments $\gamma_1, \gamma_2, \gamma_3$ intersect. W.l.o.g.\ let $\gamma_1:[0,a] \to \Br^2$ and $\gamma_2:[0,b] \to \Br^2$ intersect. Then there exist times $\tilde a$ and $\tilde  b$ with $\gamma_1(\tilde a)=\gamma_2(\tilde b)$. It holds $a- \tilde a= b - \tilde b$, otherwise we get a contradiction to the minimality of the segments $\gamma_1$ and $\gamma_2$. But then 
$$\gamma:[0,a] \to \Br^2\quad \text{with}\quad \gamma(t)=
\begin{cases} 
	\gamma_1(t) & t \leq t\tilde a\\ 
	\gamma_2(t-\tilde a + \tilde b) & t> \tilde a 
\end{cases}$$
has the same length as $\gamma_1$. Because of the kink in $\gamma(\tilde a)$ this piecewise geodesics segment can be shortened, in contradiction to $d(c(t_1), p(c(t_1))=d(c(t_1), l)$. Hence, $p$ is monotone and by the injectivity strictly monotone.
\end{proof}

\begin{proof}[\bf{Proof of Theorem~\ref{vieleStreifen-vorab}}]
We choose an arbitrary $w \in \tilde S T^2$. Let $c_1$ be the minimal geodesic with $\dot c_1(0) = w$. Hence, it holds that $c_1(0) \in \mathcal{F}$ and $c_1(r) \in\mathcal{F}_r$.
As $\mathcal{F}^{\varepsilon}$ and $\mathcal{F}_r^{\varepsilon}$ are $\varepsilon$-separating sets with maximal cardinality, there exist $y \in \mathcal{F}^{\varepsilon}$ and $z \in \mathcal{F}_r^{\varepsilon}$ with
\begin{eqnarray} \label{c1z}
d (c_1(0),y) \leq \varepsilon\quad \text{and} \quad d (c_1(r),z) \leq \varepsilon.
\end{eqnarray}
We consider the straight line $l_2$, joining $y$ and $z$, and the corresponding geodesic $c_2$ with $\dot c_2(0) = v_{yz}$ and $v_{yz} \in P_r$. By $l_1$ we denote 
a straight line which accompanies $c_1$ (see Remark~\ref{Hedlund}), satisfying $d(c_1(t), l_1) \leq D$ for all $t \in \Br$. Let $p_i: c_i(\Br) \to l_i$, $i \in \{1,2\}$ be two maps such that 
$$d(c_2(t),p_2(c_2(t)))=d(c_2(t), l_2)\leq D\quad \text{and} \quad d(c_1(t),p_1(c_1(t)))=d(c_1(t),l_1)\leq D.$$ 
By Lemma~\ref{p} these maps are injective and strictly monotone.
For an illustration of the introduced objects see Figure~\ref{fig2}.\\

\begin{figure}[h]
\begin{center}
\psfrag{Fr}{$\mathcal{F}_r$}
\psfrag{y}{$y$}
\psfrag{z}{$z$}
\psfrag{cw}{$c_1$}
\psfrag{l}{$l_1$}
\psfrag{cvt}{$c_2(t)$}
\psfrag{cwt}{$c_1(t)$}
\psfrag{c2r}{$c_2(r)$}
\psfrag{c2tr}{$c_2(\tilde r)$}
\psfrag{ptl}{$p_2(c_2(t))$}
\psfrag{qtl}{$p_1(c_1(t))$}
\psfrag{lyz}{$l_2$}
\psfrag{cvyz}{$c_2$}
\psfrag{c0}{$c_1(0)$}
\psfrag{cr}{$c_1(r)$}
\psfrag{F}{$\mathcal{F}$}
\psfrag{B}{$B(x,r+a)$}
\psfrag{le}{$\leq \varepsilon$}
\includegraphics[scale=0.9]{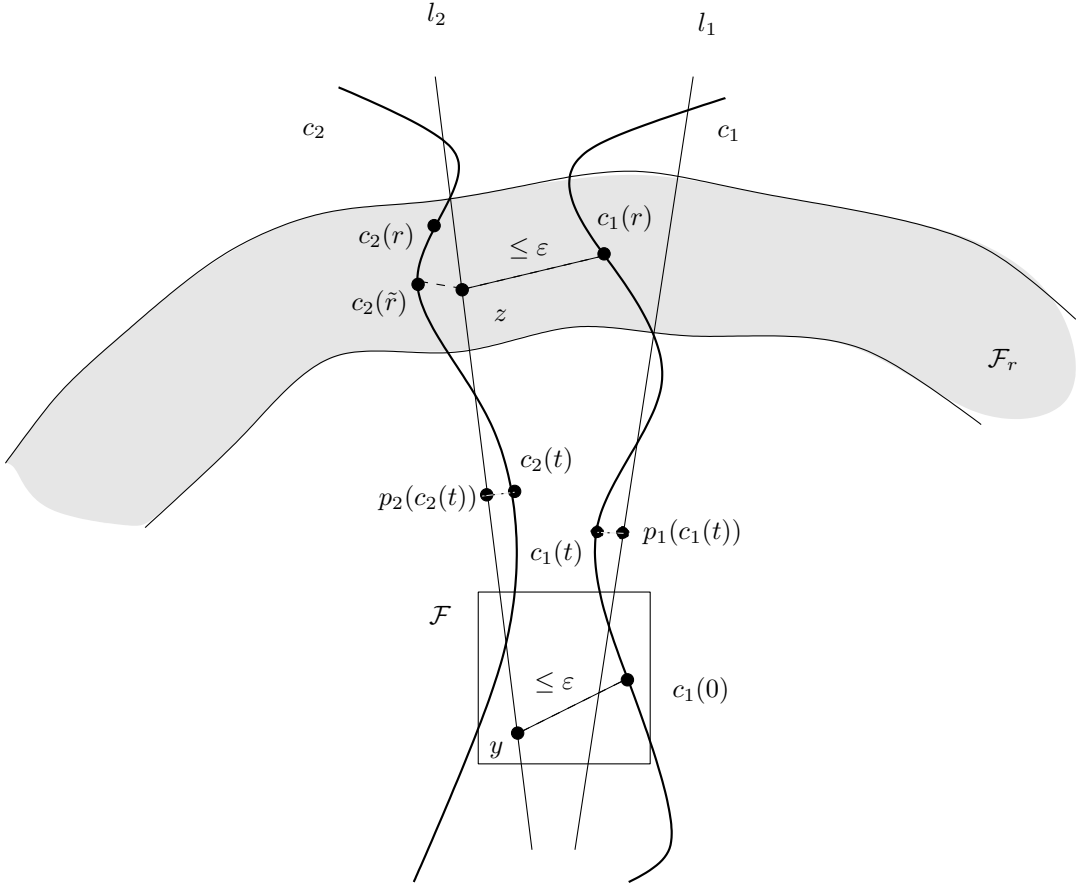}
\end{center}
\caption{\label{fig2} Illustration of $l_1$, $l_2$ and $c_1$, $c_2$ in the proof of Theorem~\ref{vieleStreifen-vorab}.} 
\end{figure}

\noindent
We have to prove that the distance of $c_1(t)$ and $c_2(t)$ is bounded for all $t \in [0,r+1]$ and that the bound is independent of $r$ and $t$.
As there exist the accompanying lines $l_1$ and $l_2$ the main part of the proof will be to show that the distance of the points $p_1(c_1(t))$ and $p_2(c_2(t))$ is bounded. For this we will consider finite segments $s_1, s_2$ of $l_1$ and $l_2$ which contain the images of the maps $p_1$ and $p_2$, respectively. Then we will use that the distances of the end points of $s_1$ and $s_2$ with respect to the Euclidian background metric give us bounds for the distances of the segments in between.\\

In the first step we want to bound $d(z, c_2(r))$. Let $\tilde r$ be a time with $d(c_2(\tilde r), z) \leq D$. Then by the triangle-inequality: 
$$\tilde r= d(c_2(\tilde r), c_2(0)) \leq d(c_2(\tilde r), z) + d(z,c_1(r)) + d(c_1(r), c_1(0)) + d(c_1(0),c_2(0)) \stackrel{(\ref{c1z})}{\leq} D+\varepsilon + r+a$$
and
$$r = d(c_1(r), c_1(0)) \leq d(c_1(r), z) + d(z,c_2(\tilde r)) + d(c_2(\tilde r), c_2(0)) + d(c_2(0), c_1(0)) \stackrel{(\ref{c1z})}{\leq}
\varepsilon + D + \tilde r + a.$$
Both inequalities together imply
$$ |r - \tilde r| \leq D + \varepsilon + a.$$
By this we get 
\begin{equation} \label{zc2}
d(z, c_2(r)) \leq d(z,c_2(\tilde r)) + d(c_2(\tilde r), c_2(r)) \leq D + |\tilde r-r| \leq D+D+\varepsilon +a= 2D+\varepsilon +a.
\end{equation}
Now we can bound the distance of $p_2(c_2(r))$ and $p_1(c_1(r))$:
\begin{eqnarray}
d(p_2(c_2(r)), p_1(c_1(r))) &\leq & d(p_2(c_2(r)), c_2(r)) + d(c_2(r), c_1(r)) + d(c_1(r), p_1(c_1(r))) \nonumber\\
&\leq & D + d(c_2(r),z) + d(z, c_1(r)) + D\nonumber\\
& \stackrel{(\ref{c1z})+(\ref{zc2})}{\leq} & D + (2D+\varepsilon +a) + \varepsilon + D \nonumber\\
&=& 4D + a + 2\varepsilon \label{r+1+2D}.
\end{eqnarray}
A much easier task is to find an upper bound for the distance between $p_2(c_2(0))$ and $p_1(c_1(0))$:
\begin{eqnarray}
d(p_2(c_2(0)), p_1(c_1(0)) &\leq & d(p_2(c_2(0)), c_2(0)) + d(c_{2}(0), c_1(0)) + d(c_1(0), p_1(c_1(0)))\nonumber\\
&\leq & D + a + D \nonumber\\
&= & 2D+a \label{-2D}.
\end{eqnarray}
We want to fix finite segments $s_1,s_2$ of $l_1$ and $l_2$ such that 
$$p_1(c_1([0,r+1])) \subset s_1\quad \text{and} \quad p_2(c_2([0,r+1])) \subset s_2.$$
By Lemma~\ref{p} we choose as $s_1$ the connected segment between $p_1(c_1(0))$ and $p_1(c_1(r+1))$, and as $s_2$ the connected segment between $p_2(c_2(0))$ and $p_2(c_2(r+1))$, respectively.
The next Lemma will allow us to bound the distance of $c_1(t)$ and $c_2(t)$ for all $t \in [0,r+1]$ independently of $r$ and $t$:

\begin{lemma}\label{GeradenLemma}
There exists a constant $H:=2(A^2(4D+a+2\varepsilon) + 2D)>0$ such that 
\begin{equation} \label{H}
d(p_2(c_2(t)), p_1(c_1(t))) \leq H, \quad \text{for all $t \in [0,r+1]$}
\end{equation}
\end{lemma}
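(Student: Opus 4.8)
The plan is to pass to the Euclidean background metric $g_E$ of Definition~\ref{background}, where $l_1$ and $l_2$ are honest straight segments, carry out the whole estimate there, and convert back to $d$ at the cost of two factors of $A$ (this is the origin of the $A^2$ in $H$). Concretely, I would first record that by \eqref{-2D} and \eqref{r+1+2D} the relevant endpoints of $s_1$ and $s_2$ are $d$-close, hence $d_E$-close up to the factor $A$: both $d_E(p_1(c_1(0)),p_2(c_2(0)))$ and $d_E(p_1(c_1(r+1)),p_2(c_2(r+1)))$ are bounded by $A(4D+a+2\varepsilon)$, absorbing into the constants the harmless passage from time $r$ to time $r+1$, over which $c_1,c_2$ move by $g$-distance $1$.

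Next I would exploit that $s_1$ and $s_2$ are straight. Parametrising each segment affinely on $[0,1]$ by maps $\sigma_1,\sigma_2$, the vector $\sigma_1(\lambda)-\sigma_2(\lambda)$ is an affine function of $\lambda$, so its Euclidean norm is convex and therefore bounded by the maximum of the two endpoint distances; thus points of $s_1$ and $s_2$ carrying the same affine parameter are $d_E$-close, uniformly in $\lambda$. Splitting $p_2(c_2(t))-p_1(c_1(t))$ into its components along and perpendicular to $l_1$, the perpendicular part is immediately controlled, since $c_i(t)$ lies within $d$-distance $D$ (hence $d_E$-distance $AD$) of $l_i$, and $l_1,l_2$ are at bounded Hausdorff distance because their nearby endpoints force them to be almost parallel over the range in question.

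The real issue, and the step I expect to be the main obstacle, is matching the two parametrisations: I must show that $p_1(c_1(t))$ and $p_2(c_2(t))$ sit at almost the same affine parameter, i.e.\ that the along-$l_1$ separation of the two projections stays bounded for \emph{every} $t\in[0,r+1]$, not merely at the endpoints. By Lemma~\ref{p} the maps $t\mapsto p_i(c_i(t))$ are strictly monotone, but monotonicity together with matched endpoints is not enough to bound the interior separation: two monotone reparametrisations of one and the same segment can disagree in the middle by an amount comparable to the whole segment length. What rescues the estimate is that $c_1,c_2$ are both globally minimising and parametrised by arc length, so $d(c_i(0),c_i(t))=t$ pins the along-line progress of each projection to be essentially linear in $t$; since the slopes of $l_1$ and $l_2$ differ by only $O(1/r)$, the two progress functions can drift apart by at most a bounded amount over a time interval of length $r+1$. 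I would make this precise using the minimal-geodesic structure of Remark~\ref{Hedlund} together with Bangert's asymptotics \cite{B1988} to bound this along-line defect independently of $r$.

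Finally I would combine the three contributions --- the convexity bound $A(4D+a+2\varepsilon)$ on the matched-parameter Euclidean distance, the bounded perpendicular defect, and the bounded synchronisation defect --- convert back from $d_E$ to $d$ with the second factor of $A$, and collect everything into the single constant $H=2\bigl(A^2(4D+a+2\varepsilon)+2D\bigr)$, which by construction depends on neither $r$ nor $t$. The factor $2$ and the additive $2D$ are precisely the slack absorbing the perpendicular and synchronisation defects.
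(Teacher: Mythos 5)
Your first half coincides with the paper's argument: the affine parametrisations $f_1,f_2$ of $s_1,s_2$ on $[0,1]$, the convexity of $k\mapsto\|f_1(k)-f_2(k)\|$, and the endpoint bounds (\ref{-2D}) and (\ref{r+1+2D}) give exactly the paper's estimate (\ref{f1kf2k}), which is where the factor $A^2$ originates. The gap is in the step you yourself single out as the crux, the synchronisation of the two parametrisations. Your proposed justification --- that $d(c_i(0),c_i(t))=t$ makes the along-line progress of each projection ``essentially linear'' while the slopes of $l_1$ and $l_2$ differ by $O(1/r)$ --- does not close it. The identity $d(c_i(0),c_i(t))=t$ pins the \emph{Riemannian} distance between projections only up to an additive $2D$, and converting that to Euclidean along-line progress costs a multiplicative factor lying anywhere in $[1/A,A]$. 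Two progress functions that are linear only up to such a multiplicative ambiguity can drift apart by an amount of order $(A^2-1)\,t$, i.e.\ linearly in $t$, so no bound independent of $r$ follows. What you would actually need is that both geodesics advance along their lines at the \emph{same} rate up to an additive $O(1)$ error; that is essentially the statement being proved, and Bangert's asymptotics (convergence of the displacement to the stable norm) yields only an $o(t)$ error, not $O(1)$.

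The paper avoids this entirely by comparing positions first and times second. For fixed $t$ it defines $k$ by $p_2(c_2(t))=f_2(k)$, uses the Hedlund--Morse bound to find $\tilde t$ with $d(f_1(k),c_1(\tilde t))\le D$, and concludes $d(p_2(c_2(t)),p_1(c_1(\tilde t)))\le B:=A^2(4D+a+2\varepsilon)+2D$ from the convexity estimate. Only afterwards does it bound $|t-\tilde t|\le 2D+B+a$ by a chain of triangle inequalities in which minimality and arc-length parametrisation enter solely through $d(c_i(0),c_i(s))=s$ --- no slope comparison and no asymptotics are needed. Adding $d(c_1(\tilde t),c_1(t))\le|t-\tilde t|$ and the two projection defects then yields $H$. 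If you replace your synchronisation step by this two-stage comparison, your argument becomes the paper's proof; as written, the crucial step is not established.
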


Before we give a proof of this Lemma, we finish the proof of the Theorem~\ref{vieleStreifen-vorab}:\\
By Lemma~\ref{GeradenLemma} we get for all $t \in [0, r+1]$
\begin{eqnarray*}
d(c_2(t), c_1(t)) &\leq & d(c_2(t), p_2(c_2(t))) +d(p_2(c_2(t)), p_1(c_1(t))) + d(p_1(c_1(t)), c_1(t))\\
& \stackrel{(\ref{H})}{\leq} & D + H + D \\
&=& 2D +H.
\end{eqnarray*}
This implies the statement of the Theorem:
$$\bar d(w, v_{yz})_{\tilde \phi, r}= \max_{0 \leq t \leq r+1} d (c_w(t), c_{v_{yz}}(t))= \max_{0 \leq t \leq r+1} d(c_1(t), c_2(t)) \leq 2D+H.$$
We set $\beta:=\beta(D, A, a, \varepsilon) = 2D+H = 6D + 2(A^2(4D+a+2 \varepsilon) + 2D) = 10D + 2A^2(4D+a+2 \varepsilon)$.\\

\noindent
Hence, $P_r$ is a $(r,\beta)$-spanning set of $\tilde S T^2$ with respect to $\tilde \phi^t$ and the distance function $\bar d$.
\end{proof}

\begin{proof}[\bf{Proof of Lemma~\ref{GeradenLemma}}]
To prove this statement we will first show that for each $t \in [0,r+1]$ there exists $\tilde t \in \Br$ such that 
\begin{eqnarray} \label{B}
d(p_2(c_2(t)), p_1(c_1(\tilde t))) \leq B := A^2 (4D + a + 2\varepsilon) +2D
\end{eqnarray}
and 
\begin{eqnarray} \label{t-t}
|t- \tilde t| \leq 2D+B+a.
\end{eqnarray}
Then we will conclude the statement of the Lemma.\\

\noindent
As introduced before let $d_E(\cdot, \cdot)$ denote the Euclidian distance function and $A>0$ the equivalence constant between the Euclidian and Riemannian distance. We consider two affine linear functions $f_1,f_2:[0,1] \to \Br^2$ with 
$$f_1(k) = (p_1(c_1(r+1))-p_1(c_1(0)))k + p_1(c_1(0))$$ and 
$$f_2(k) = (p_2(c_2(r+1))-p_2(c_2(0)))k + p_2(c_2(0)).$$ 
Obviously $f_1([0,1])=s_1$ and $f_2([0,1]) = s_2$.
Then $d_E(f_1(k), f_2(k)) = ||f_1(k)-f_2(k)||$ is a convex and continuous function. Hence, it has its maximum on the boundary, i.e. 
\begin{eqnarray}
d_E(f_1(k), f_2(k)) &\leq& \max\{ d_E(f_1(0), f_2(0)), d_E(f_1(1), f_2(1))\} \nonumber\\
&=& \max\{d_E(p_1(c_1(0)),p_2(c_2(0))), d_E(p_1(c_1(r+1)),p_2(c_2(r+1)))\} \nonumber\\
&\leq& A \cdot \max\{d(p_1(c_1(0)),p_2(c_2(0))), d(p_1(c_1(r+1)),p_2(c_2(r+1)))\}\nonumber\\
& \stackrel{(\ref{-2D})+(\ref{r+1+2D})}{\leq} & A(4D + a + 2\varepsilon) \label{f1kf2k}
\end{eqnarray}
For fixed $t \in [0, r+1]$ we choose $k \in [0,1]$ such that $p_2(c_2(t))=f_2(k)$. There exists $\tilde t \in \Br$ 
such that $d(f_1(k), c_1(\tilde t)) \leq D$. By construction it holds $d(p_1(c_1(\tilde t)), c_1(\tilde t)) \leq D$ and obviously, 
$$d(f_1(k), p_1(c_1(\tilde t)))\leq d(f_1(k), c_1(\tilde t)) + d(c_1(\tilde t), p_1(c_1(\tilde t))) \leq 2D.$$
This implies that 
\begin{eqnarray}
d(p_2(c_2(t)), p_1(c_1(\tilde t))) &\leq& d(p_2(c_2(t)), f_1(k)) + d(f_1(k), p_1(c_1(\tilde t))) \nonumber\\
&\leq&  A d_E(p_2(c_2(t)), f_1(k))  + 2D \nonumber\\
&=& A d_E(f_2(k), f_1(k)) +2D \nonumber\\
&\stackrel{(\ref{f1kf2k})}{\leq}& A^2 (4D + a + 2\varepsilon) +2D. \label{t+t}
\end{eqnarray}
\begin{figure}[h]
\begin{center}
\psfrag{Fr}{$\mathcal{F}_r$}
\psfrag{cw}{$c_1$}
\psfrag{l}{$l_1$}
\psfrag{cvt}{$c_2(t)$}
\psfrag{cwt}{$c_1(t)$}
\psfrag{ptl}{$p_2(c_2(t))$}
\psfrag{qtl}{$p_1((c_1(\tilde t))$}
\psfrag{lyz}{$l_2$}
\psfrag{cvyz}{$c_2$}
\psfrag{c0}{$c_1(0)$}
\psfrag{cv0}{$c_2(0)$}
\psfrag{p01}{$p_1(c_1(0))$}
\psfrag{p02}{$p_2(c_2(0))$}
\psfrag{F}{$\mathcal{F}$}
\psfrag{cvrd}{$c_2(r+1)$}
\psfrag{cwrd}{$c_1(r+1)$}
\psfrag{pr2}{$p_2(c_2(r+1))$}
\psfrag{pr1}{$p_1(c_1(r+1))$}
\psfrag{cvd}{$c_2(-2D)$}
\psfrag{cwd}{$c_1(-2D)$}
\psfrag{f2k}{$f_1(k)$}
\psfrag{cwtt}{$c_1(\tilde t)$}
\includegraphics[scale=0.76]{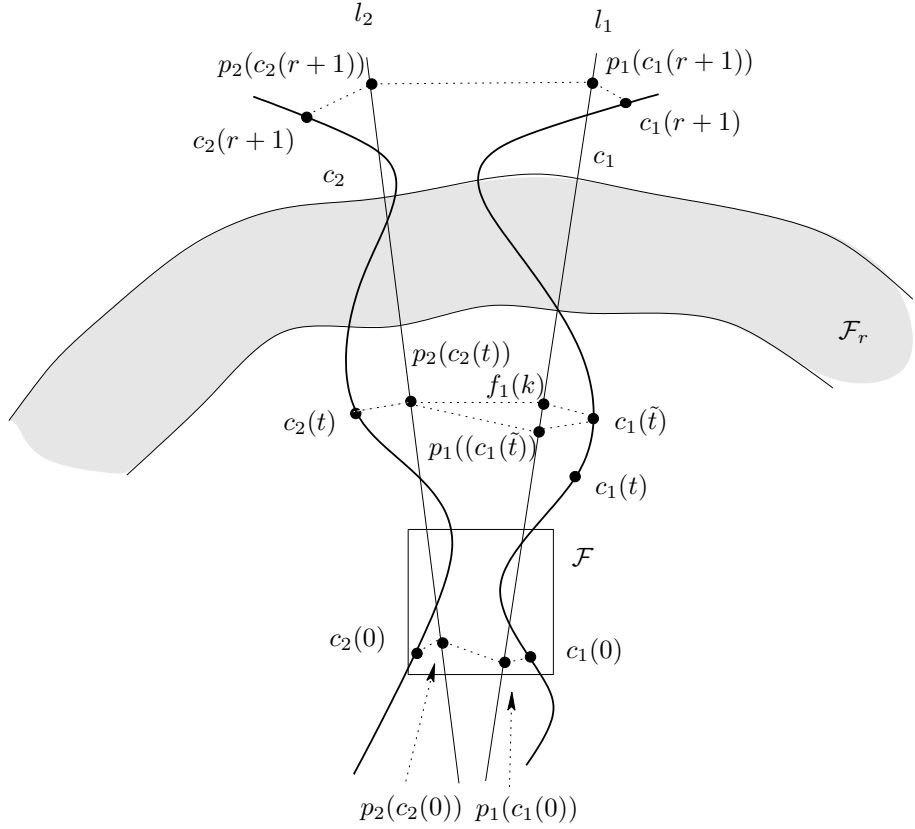}
\end{center}
\caption{\label{fig4} Illustration of notions introduced in Lemma~\ref{GeradenLemma}.} 
\end{figure}
Now we will bound $|t - \tilde t|$:
\begin{eqnarray*}
\tilde t = d(c_1(\tilde t), c_1(0)) &\leq & d(c_1(\tilde t), p_1(c_1(\tilde t))) + d(p_1(c_1(\tilde t)), p_2(c_2(t))) \\
& & + d( p_2(c_2(t)), c_2(t)) + d(c_2(t), c_2(0)) + d(c_2(0), c_1(0))\\
&\stackrel{(\ref{t+t})}{\leq} & D + B + D + t + a
\end{eqnarray*}
and
\begin{eqnarray*}
t= d(c_2(t), c_2(0)) & \leq & d(c_2(0), c_1(0)) + d( c_1(0), c_1(\tilde t))\\
& & + d( c_1(\tilde t), p_1(c_1(\tilde t))) + d(p_1(c_1(\tilde t)), p_2(c_2(t))) + d(p_2(c_2(t)), c_2(t))\\
&\stackrel{(\ref{t+t})}{\leq}& a+\tilde t + D + B + D.
\end{eqnarray*}
Both inequalities together imply 
\begin{eqnarray*}
|t-\tilde t| \leq 2D+B+a. 
\end{eqnarray*}

We can now conclude that
\begin{eqnarray*}
d(p_2(c_2(t)), p_1(c_1(t))) &\leq & d(p_2(c_2(t)), p_1(c_1(\tilde t))) + d(p_1(c_1(\tilde t)), c_1(\tilde t))\\
& & + d(c_1(\tilde t), c_1(t)) + d(c_1(t), p_1(c_1(t)))\\
& \stackrel{(\ref{B})+(\ref{t-t})}{\leq} & B + D + (2D+B+a) + D\\
&=& 4D + 2(A^2(4D+a+2 \varepsilon) + 2D)\\
&=:& H.
\end{eqnarray*}
\end{proof}

\begin{proof}[\bf{Proof of Lemma~\ref{vieleStreifen}}]
Let $\tilde P_r$ be a $(r, \beta)$-spanning set of $\tilde ST^2$ of minimal cardinality with respect to $\tilde \phi^t$ and $\bar d$. Then:
\begin{eqnarray*}
h_{top}(\tilde \phi, \beta) &=& \limsup_{r \to \infty} \frac{1}{r} \log (\#\tilde P_r) \\
&\leq& \limsup_{r \to \infty}\frac{1}{r} \log(\#P_r) \qquad \qquad \text{(by Theorem~\ref{vieleStreifen-vorab})}\\
&=& \limsup_{r \to  \infty} \frac{1}{r} \log(\#\mathcal{F}^{\varepsilon} \cdot \#\mathcal{F}_r^{\varepsilon})\\
&=& \limsup_{r \to \infty} \frac{1}{r} \log (\#\mathcal{F}_r^{\varepsilon}) \qquad \qquad \text{($\#\mathcal{F}^{\varepsilon}$ in an constant)} \\
&\leq& \limsup_{r \to \infty} \frac{1}{r} \log\left(\frac{\vol \left(B(x,r+a+\frac{\varepsilon}{2})\right)}{C_{\varepsilon}}\right) \qquad \qquad \text{(by (\ref{Ball})\;)}\\
&=& \limsup_{r \to \infty} \frac{r+a+\frac{\varepsilon}{2}}{r} \cdot \frac{1}{r+a+\frac{\varepsilon}{2}} \log\left(\vol \left( B(x,r+a+\frac{\varepsilon}{2})\right)\right)\\
&=& \limsup_{\tilde r \to \infty} \frac{1}{\tilde r} \log\left(\vol \left(B(x,\tilde r)\right)\right)\qquad \text{(volume growth rate)}\\
&\leq & \limsup_{\tilde r \to \infty} \frac{1}{\tilde r} \log\left(\vol \left(B_E(x,A \tilde r)\right)\right)\qquad \text{($B_E$ denotes here the Euclidian ball)}\\
&\leq & \limsup_{\tilde r \to \infty} \frac{1}{\tilde r} \log (A^4 \tilde r^2) \\
&=& 0.  
\end{eqnarray*} 
\end{proof}

\begin{dfn}\label{expansive}
Let $(Y,d)$ be a compact metric space and $\phi^t:Y \to Y$ a continuous flow.
We will call the set 
$$S_{\mu}(v) = \{w \in Y \, |\, \sup_{t \in \Br}d(\phi^t(v), \phi^t(w)) \leq \mu\}$$ 
a {\it $\mu$-tube} for an element $v \in Y$.  \\
The topological entropy of $\phi^t$ restricted to $\mu$-tubes is given by
$$h_{\phi}^*(\mu):= \sup_{v \in Y}h_{top}(\phi, S_{\mu}(v)).$$
$\phi^t$ is called {\it $h$-expansive} if $h_{\phi}^*(\mu)=0$ for some $\mu>0$. For more details see~\cite{Bowen}.
\end{dfn}

\begin{theorem}\label{expansivitaet}
The geodesic flow $\tilde \phi^t$ on $\tilde ST^2$ is $h$-expansive for $\mu=\beta(D,A,a, \varepsilon)>0$
defined in Theorem~\ref{vieleStreifen}.
\end{theorem}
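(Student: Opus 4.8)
The goal is to verify $h_\phi^*(\beta) = \sup_{v}h_{top}(\tilde\phi, S_\beta(v)) = 0$, and I would do this by showing that each individual $\beta$-tube carries only polynomially (in fact linearly) many separated orbits. Working with the metric $\bar d$ (allowed by uniform equivalence, Remark~\ref{dist}), the condition $w \in S_\beta(v)$ reads $\sup_{t\in\Br} d(c_v(t), c_w(t)) \le \beta$: the two minimal geodesics stay within Riemannian distance $\beta$ for all time. Since $c_v$ accompanies a Euclidean line $l_v$ with $d(c_v(t), l_v) \le D$ (Remark~\ref{Hedlund}), the line accompanying $c_w$ remains within $\beta + 2D$ of $l_v$ for all $t$; as two Euclidean lines at bounded distance for all $t \in \Br$ must be parallel, every geodesic in $S_\beta(v)$ has the same rotation number $\alpha$ as $v$ and is confined to the strip $\{x \in \Br^2 : d_E(x, l_v) \le W\}$, $W := A(\beta + D)$.

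Next I would reduce the tube to a totally ordered, essentially one-dimensional family using Bangert's order (Remark~\ref{Hedlund}). By Lemma~\ref{p} each geodesic in the strip projects strictly monotonically onto $l_v$ and is therefore a graph $s \mapsto h_w(s)$ over $l_v$ with $|h_w| \le W$. If two such geodesics do not intersect, the continuous function $h_{w_1} - h_{w_2}$ never vanishes, hence has constant sign, so a non-crossing family yields graphs that are totally ordered pointwise. For irrational $\alpha$ the whole set $\mathcal{M}_\alpha$ is totally ordered, so $S_\beta(v)$ is a single such ordered family. For rational $\alpha$ I would use the decomposition $\mathcal{M}_\alpha = (\mathcal{M}_\alpha^{\operatorname{per}} \cup \mathcal{M}_\alpha^+) \cup (\mathcal{M}_\alpha^{\operatorname{per}} \cup \mathcal{M}_\alpha^-)$ together with the fact that each of the two pieces is ordered; thus $S_\beta(v)$ is covered by two totally ordered families of graphs, even though a single $+$-orbit and $-$-orbit may cross once inside a gap.

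The heart of the argument is that a totally ordered family of uniformly bounded graphs produces only linearly many separated orbits. Sampling transversals to $l_v$ at positions $s_0 < \ldots < s_N$ over an arc-length range of order $T$ (so $N \sim T$), each orbit is recorded by a crossing vector $(h_w(s_0), \ldots, h_w(s_N)) \in [-W,W]^{N+1}$, and these vectors form a chain for the coordinatewise order. Along such a chain the coordinate sum $\sum_{j} h_w(s_j)$ is monotone, ranges in an interval of length $2(N+1)W$, and increases by more than a fixed multiple of $\varepsilon$ at each step that is dynamically $\varepsilon$-separated; hence an $\varepsilon$-separated chain has at most $O(WT/\varepsilon)$ elements. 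In the rational case I add the two bounds from the two ordered families. In every case $r_T(\tilde\phi, \varepsilon) \le \mathrm{const}\cdot T$, so $h_{top}(\tilde\phi, S_\beta(v)) = \limsup_{T\to\infty}\tfrac1T \log(\mathrm{const}\cdot T) = 0$ for all $v$, giving $h_\phi^*(\beta) = 0$.

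I expect the main obstacle to be the faithful translation between $\varepsilon$-separation in the dynamical metric $\bar d$ over $[0,T]$ and $\varepsilon'$-separation of the sampled crossing vectors. Two issues must be controlled uniformly in $v$, $T$ and $r$: first, the arc-length parameter on two distinct geodesics corresponds to slightly different positions along $l_v$, so one has to absorb this reparametrization and the Euclidean-Riemannian distortion $A$ into the constants without destroying the per-step lower bound on the coordinate sum; second, the phase (arc-length offset) must be shown to contribute only a bounded factor, which follows because the tube condition keeps $c_w(t)$ synchronized with $c_v(t)$ and hence pins the phase to a range of size $O(A^2\beta)$. The other delicate point, already built into the plan, is the rational case: a $+$-orbit and a $-$-orbit in the same gap genuinely cross, so one cannot order all of $\mathcal{M}_\alpha$ at once; splitting into $\mathcal{M}_\alpha^{\operatorname{per}} \cup \mathcal{M}_\alpha^{\pm}$ confines every crossing to the boundary between the two linear counts and keeps the total linear.
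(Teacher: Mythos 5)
Your overall skeleton matches the paper's: reduce the tube $S_\beta(v)$ to geodesics of a single rotation number via the accompanying parallel lines, exploit Bangert's order (splitting $\mathcal{M}_\alpha$ into at most two ordered families in the rational case), and show that an ordered family admits only linearly many $(T,\delta)$-separated elements. Where you genuinely diverge is in the counting device. The paper attaches to each separated pair $w_1,w_2$ a geodesic triangle $\Delta(\delta,w_1,w_2)$ with vertices $c_{w_1}(t_0)$, $c_{w_2}(t_0\pm\frac{\delta}{2})$, proves by a compactness argument (Lemma~\ref{c1c2}) that every such min-$\delta$-triangle has area at least $C_2(\delta)>0$, and packs these triangles disjointly into the neighborhood $N_{\beta+2\delta,T}(v)$, whose area grows linearly in $T$. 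You instead sample the height functions $h_w$ over $l_v$ and telescope a monotone coordinate sum along the order.

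The gap sits in your per-step increment claim, which you flag but do not close. Dynamical $\delta$-separation only provides one time $t_0$ with $d(c_{w_1}(t_0),c_{w_2}(t_0))\ge\delta$, and by the triangle inequality this splits into image separation ($d(c_{w_1}(t_0),c_{w_2}(\Br))\ge\delta/2$) or phase separation (nearly identical images with parametrizations offset by at least $\delta/2$). In the second case the sum $\sum_j\bigl(h_{w_2}(s_j)-h_{w_1}(s_j)\bigr)$ can be arbitrarily small, so such pairs are invisible to your chain argument; observing that the tube condition pins the phase to a range of size $O(A^2\beta)$ does not yet bound how many pairwise $\delta$-separated phases can sit over a thin bundle of distinct but nearly coincident images (the paper itself only disposes of the exactly-equal-image case, contributing the factor $2\beta/\delta$). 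In the first case you still need a \emph{sample point} at which the gap is of order $\delta$: the pointwise gap at $s^*=p(c_{w_1}(t_0))$ is indeed of that order, but transferring it to the nearest $s_j$ requires a uniform modulus of continuity for the family of height functions, and a minimal geodesic accompanying $l_v$ within distance $D$ need not be the graph of a uniformly Lipschitz function (locally it may run nearly perpendicular to $l_v$), so no such modulus is available without a further argument. Both defects are precisely what the paper's area argument absorbs: the triangle is a two-dimensional, parametrization-sensitive object whose volume is bounded below uniformly by compactness, independently of how the gap is distributed along the line. To repair your route you would need an analogous uniform lower bound (for instance, equicontinuity of the height functions obtained by a compactness argument on min-$\delta$-configurations), at which point you have essentially reconstructed Lemma~\ref{c1c2}.
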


\begin{proof}
Let $v \in \tilde S T^2$ and consider the distance function $\bar d(v,w)=\max\limits_{s \in [0,1]} d(c_v(s), c_w(s))$ as defined in Remark~\ref{dist}. 
Then, 
$$\sup_{t \in \Br} \bar d(\tilde \phi^t(v), \tilde \phi^t(w)) = \sup_{t \in \Br} \max\limits_{s \in [0,1]} d(c_v(t+s), c_w(t+s)) = \sup_{t \in \Br} d(c_v(t), c_w(t)).$$
Hence the $\beta$-tube is given by
$$S_{\beta}(v) = \{w \in \tilde S T^2 \, |\, \sup_{t \in \Br} d(c_v(t), c_w(t)) \leq \beta\}.$$
We will show that $h_{top}(\tilde \phi, S_{\beta}(v))=0$. \\

\noindent
Minimal geodesics in the tube $S_{\beta}(v)$ have a bounded distance. Hence their accompanying straight lines are parallel.\\
This implies that all minimal geodesics in this tube are of the same type, i.e. they have the same rotation number. We distinguish the cases that the direction of the tube is rational or irrational. \\
1) First we consider the case that the direction is irrational. According to Bangert (see~\cite{B1988} or Remark~\ref{Hedlund}) all of the minimal geodesics have no intersections.\\
We fix $v \in \tilde S T^2$ and $\delta >0$ with $\delta \ll \min\{\beta, a\}$, where $a$ is the diameter of a fundamental area as introduced in the central construction. We denote by $F(T, \delta)$ a $(T, \delta)$-separated set for $S_{\beta}(v)$ with maximal cardinality and with respect to $\tilde \phi^t$ and $\bar d$. Let $w_1,w_2 \in F(T, \delta)$, i.e., $\bar d(w_1,w_2)_{T} \geq \delta$.
As the minimal geodesics $c_{w_1}$ and $c_{w_2}$ do not intersect, they are ordered if $c_{w_1}(\Br) \not= c_{w_2}(\Br)$. The special case that the  geodesics have a shifted parametrization but the same image, we will consider later.\\
As $w_1,w_2 \in F(T, \delta)$ there exists a time $t_0 \in [0,T+1]$ such that $d(c_{w_1}(t_0),c_{w_2}(t_0)) = l \geq \delta$.

We choose a minimal geodesic $\tilde c$ with length $l$ connecting $c_{w_1}(t_0)$ and $c_{w_2}(t_0)$. By construction it holds
$$\delta \leq d(c_{w_1}(t_0), c_{w_2}(t_0)) \leq 2\beta.$$
\\
We consider the geodesic triangle $\Delta(\delta)=\Delta(\delta, w_1, w_2):=\triangle (c_{w_1}(t_0), c_{w_2}(t_0- \frac{\delta}{2}), c_{w_2}(t_0+ \frac{\delta}{2}))$, where the sides of the triangle have minimal length.\\

\begin{figure}[h]
\begin{center}
\psfrag{cwt}{$c_{w_2}(t_0)$}
\psfrag{cvt}{$c_{w_1}(t_0)$}
\psfrag{cw}{$c_{w_2}$}
\psfrag{cv}{$c_{w_1}$}
\psfrag{c}{$q$}
\psfrag{a}{$\alpha$}
\psfrag{b}{$\beta$}
\psfrag{cwt-}{$c_{w_2}(t_0-\frac{\delta}{2})$}
\psfrag{cwt+}{$c_{w_2}(t_0+\frac{\delta}{2})$}
\includegraphics[scale=0.3]{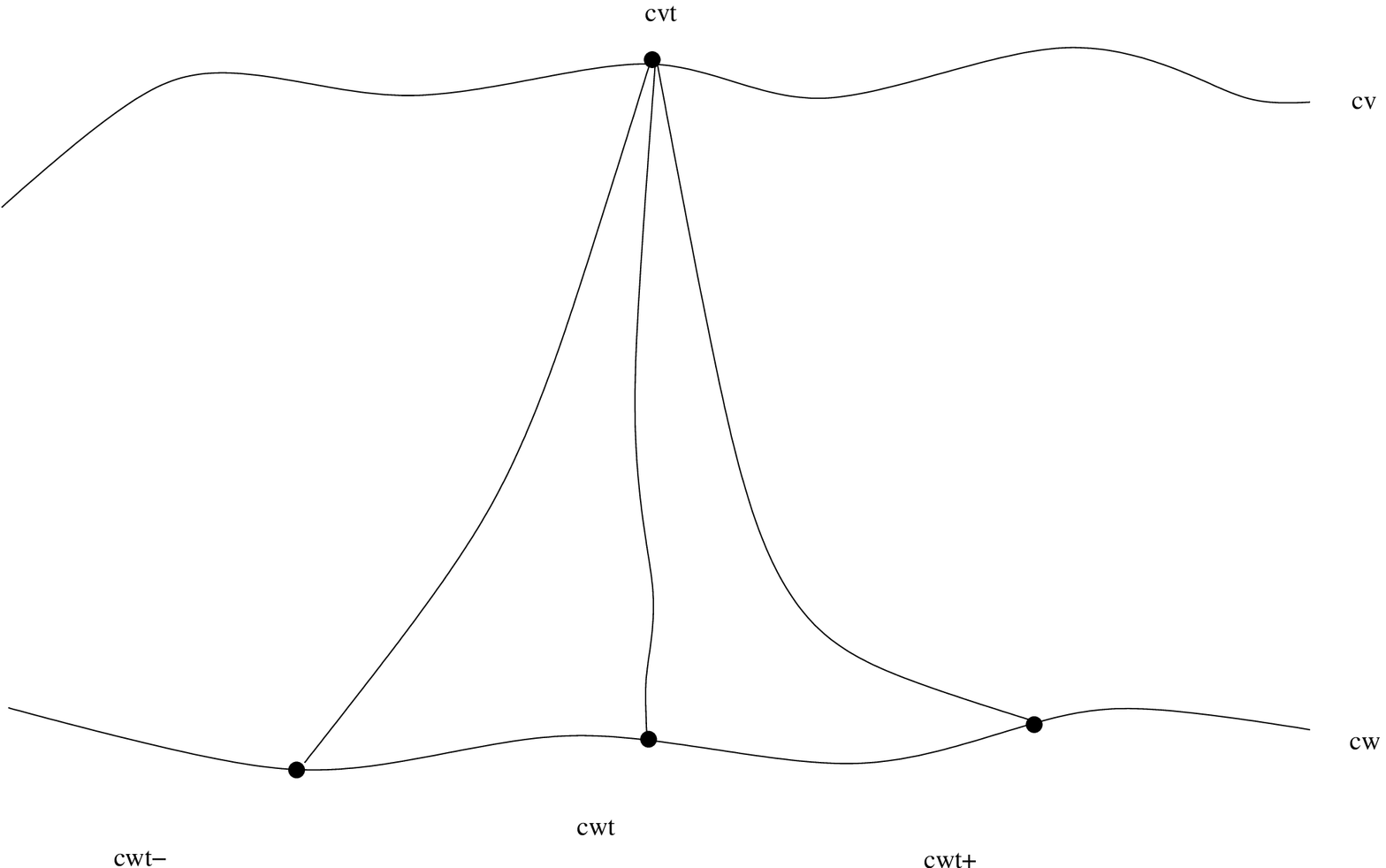}
\end{center}
\end{figure}
Obviously, both $c_{w_2}$ and $c_{w_1}$ intersect this triangle in no inner point. Otherwise, we would get a contradiction to the minimality of $c_{w_2}$ and $c_{w_1}$. Hence, the whole triangle lies in the strip between $c_{w_1}$ and $c_{w_2}$.\\
Let
$$N_{\beta+2 \delta,T}(v)=\{x \in \Br^2\;|\; d(x, c_v([0, T+1])) \leq \beta + 2 \delta\}$$
denote the $(\beta+2 \delta)$-neighborhood of $c_v([0, T+1])$. All geodesic segments $c_w([0, T+1])$ with $w \in F(T, \delta) \subset S_{\beta}(v)$ lie in $N_{\beta+2\delta, T}(v)$ and also the triangles $\Delta(\delta, w_1, w_2)$ for $w_1, w_2 \in F(T, \delta)$. Furthermore, 
there exists a constant $C_1>0$ such that the volume of $N_{\beta+2\delta, T}(v)$ is smaller than $C_1 \cdot \beta \cdot (T+1+2\beta +4 \delta)$. \\

\noindent
The main idea in the proof of the statement of this Theorem \ref{expansivitaet} is that the number of elements of $F(T, \delta)$ corresponds to the number of triangles $\Delta(\delta, w_1, w_2)$. And the number of the triangles is bounded by the quotient of the volume of $N_{\beta+2\delta, T}(v)$ and a lower bound of the volume of a single triangle.\\
For precise estimations we will need the following Lemma:
\begin{lemma} \label{c1c2}
Let $\vol(\Delta(\delta, w_1, w_2))$ denote the volume of the geodesic triangle $\Delta(\delta, w_1, w_2)$.
Then, there exists a constant $C_2(\delta)>0$ such that
$$\vol(\Delta(\delta, w_1, w_2)) \geq C_2(\delta).$$ 
\end{lemma}

\noindent
Now we will bound the number of $(T, \delta)$-separated minimal geodesics in the tube $S_{\beta}(v)$ with irrational direction.\\
Let $\tilde F(T, \delta)$ denote a $(T, \delta)$-separated set of maximal cardinality such that $c_{w_1}(\Br) \not=c_{w_2}(\Br)$ for $w_1\not= w_2 \in \tilde F(T, \delta)$.
Applying Lemma~\ref{c1c2} 
we get:
$$\#\tilde F(T, \delta) \leq \frac{\vol(N_{\beta+2\delta, T}(v))}{\min\limits_{w_1, w_2 \in \tilde F(T, \delta)}\vol(\Delta(\delta, w_1, w_2))} \leq \frac{C_1 \cdot\beta \cdot (T+1+2\beta + 4 \delta)}{C_2(\delta)}.$$
Now we consider minimal geodesics with the same image. Also they have to be separated and their number can be bounded by  $\frac{2\beta}{\delta}$. It follows for the whole $(T, \delta)$-separated set $F(T, \delta)$ 
with respect to the geodesic flow $\tilde \phi^t$ and the distance function $\bar d$:
$$\#F(T, \delta) \leq \# \tilde F(T, \delta) \cdot \frac{2\beta}{\delta}\leq \frac{C_1 \cdot \beta \cdot (T+1+2\beta + 4\delta) }{C_2(\delta)} \cdot \frac{2\beta}{\delta},$$
i.e., the growth of $\#F(T, \delta)$ is bounded by a linear function in $T$.\\
Let $I \subset \tilde ST^2$ denote the set of initial conditions $v$ such that the geodesics $c_v$ are minimal and have an irrational rotation number. Then,
$$ h^*_{\tilde \phi}(\beta) = \sup_{v \in \tilde ST^2|_I} h_{top}(\tilde \phi, S_{\beta}(v)) =\sup_{v \in \tilde ST^2|_I} \lim_{\delta \to 0} \limsup_{T \to \infty} \frac{1}{T} \log (\#F(T, \delta)) = 0.$$

2) As the second case we consider a tube $S_{\beta}(v)$ such that $v$ has a rational rotation number $\alpha$. Assume that a $(T, \delta)$-separated set $F(T, \delta)\subset S_{\beta}(v)$ of maximal cardinality fulfills the following properties:
\begin{enumerate}
\item[1)] For $w \in F(T, \delta)$ the corresponding minimal geodesic $c_w$ has rotation number $\alpha$.
\item[2)] $\# F(T, \delta)$ growths exponentially with $T$.
\end{enumerate}
Let $A_{\alpha}(T)=\{c_w\;|\; w \in F(T, \delta)\}$. The set of minimal geodesics with a fixed rational direction $\alpha$ consists of the three subsets $\mathcal{M}_{\alpha} ^{\operatorname{per}}, \mathcal{M}_{\alpha}^{\pm}$ and $\mathcal{M}_{\alpha}^{\mp}$ as introduced in Remark~\ref{Hedlund}. Then by assumption one of the ordered sets $A_{\alpha}(T) \cap \mathcal{M}_{\alpha} ^{\operatorname{per}}$, $A_{\alpha}(T) \cap \mathcal{M}_{\alpha}^{\pm}$ or $A_{\alpha}(T) \cap \mathcal{M}_{\alpha}^{\mp}$ growths exponentially with $T$. But by the same arguments as in the case of irrational directions we get a contradiction because the growth of the number of geodesics without intersections between each other is bounded by linear growth.
\end{proof}

\begin{proof}[\bf{Proof of Lemma~\ref{c1c2}}]
By the triangle-inequality we get that the length $l$ of the single sides of the triangle $\Delta(\delta, w_1, w_2)$ is bounded by
$$\frac{\delta}{2}\leq l \leq  2 \beta + \frac{\delta}{2}.$$ 
Assume there existed a triangle with a side of length $l$. Then we get an alternative way from $c_{w_1}(t_0)$ to $c_{w_2}(t_0)$ with length $\delta$ and a kink (if there were no kink, then $c_{w_1}$ and $c_{w_2}$ would intersect), in contradiction to the minimality of the geodesic segment $\tilde c$. Hence, especially $\frac{\delta}{2}< l \leq  2\beta + \frac{\delta}{2}$.
Assume that there does not exist a lower bound $C_2(\delta)>0$ for $\vol(\Delta(\delta, w_1, w_2))$.
Let a {\it min-$\delta$-triangle} be a triangle such that its sides are minimal geodesic segments, one side has length $\delta$, the length $l$ of the other sides is bounded by $\frac{\delta}{2}< l \leq 2\beta + \frac{\delta}{2}$ and their vertices lie on two minimal geodesics, i.e., the {\it min-$\delta$-triangle} are of the type of the triangles $\Delta(\delta, w_1, w_2)$.
We fix a compact set $C$ in $\Br^2$ large enough such that for each min-$\delta$-triangle $\triangle$ in $\Br^2$ there exists a translation element $\tau$ such that $\tau \triangle \in C$. 
We consider the set
$$D(\delta)=\{\triangle\;|\; \text{$\triangle \subset C$ is a min-$\delta$-triangle}\}.$$ 
Hence, all possible triangles belong to $D(\delta)$. 
As the constant $C_2(\delta)$ does not exist, there exists a sequence of triangles $\triangle_n \subset D(\delta)$ such that their corners converge and $\vol(\triangle_n)$ tends to zero. As $C$ is compact there exists a triangle $\tilde \triangle$ with $\vol(\tilde \triangle)=0$. Also for $\tilde \triangle$ it holds that $\frac{\delta}{2}< l$, otherwise we get a contradiction to the fact that the minimal geodesics through the vertices do not intersect.
But each non-degenerate geodesic triangle contains an open set which has positive volume, 
in contradiction to the existence of $\tilde \triangle$. Hence, $C_2(\delta)>0$ exists.

\end{proof}

\begin{theorem}[Bowen, see~\cite{Bowen}] \label{Theo-Bowen} 
Let $(Y,d)$ be a compact metric space and $f: Y \to Y$ a homeomorphism. Then
$$h_{top}(f) \leq h_{top}(f, \beta) + h^*_{f}(\beta).$$ In particular, $h_{top}(f) = h_{top}(f, \beta)$ if $\beta$ is an $h$-expansive constant for $f$.
\end{theorem}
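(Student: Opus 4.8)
The plan is to establish the displayed inequality at a fixed scale and then pass to the limit, after which the ``in particular'' clause follows from monotonicity of $h_{top}(f,\varepsilon)$ in $\varepsilon$. Throughout I write $d_n(x,y):=\max_{0\le k\le n}d(f^kx,f^ky)$ for the $n$-step dynamical distance, $B_n(y,\beta):=\{z\in Y\,|\,d_n(y,z)\le\beta\}$ for the associated Bowen ball, and $r_n(\delta,K)$ for the maximal cardinality of a $\delta$-separated subset of $K\subset Y$ measured in $d_n$; thus $r_n(\delta,Y)=r_n(f,\delta)$ in the notation of Definition~\ref{topent} and $h_{top}(f,\delta)=\limsup_n\frac1n\log r_n(\delta,Y)$. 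Since $f$ is a homeomorphism, the tube of Definition~\ref{expansive} reads $S_\beta(y)=\{z\in Y\,|\,d(f^ky,f^kz)\le\beta\ \text{for all }k\in\Bz\}$.

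First I would set up a two-scale counting estimate. Fix $0<\delta<\beta$. A maximal $(n,\beta)$-separated subset $F_n\subset Y$ is automatically $(n,\beta)$-spanning, so the balls $\{B_n(y,\beta)\}_{y\in F_n}$ cover $Y$. Distributing a maximal $(n,\delta)$-separated set of $Y$ over this cover yields
\[
r_n(\delta,Y)\ \le\ \#F_n\cdot\max_{y\in F_n} r_n\bigl(\delta,B_n(y,\beta)\bigr)\ =\ r_n(\beta,Y)\cdot\max_{y\in Y} r_n\bigl(\delta,B_n(y,\beta)\bigr).
\]
Applying $\tfrac1n\log$ and $\limsup_{n}$ reduces the theorem to the single estimate
\[
\lim_{\delta\to0}\ \limsup_{n\to\infty}\ \frac1n\log\Bigl(\max_{y\in Y} r_n\bigl(\delta,B_n(y,\beta)\bigr)\Bigr)\ \le\ h^*_f(\beta).
\]

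The heart of the argument, and the step I expect to be the main obstacle, is this last inequality, because the Bowen ball $B_n(y,\beta)$ constrains the orbit only on the finite window $0\le k\le n$, whereas the tube $S_\beta(y)$ entering $h^*_f(\beta)$ is defined by the bi-infinite condition $k\in\Bz$; the finite ball is genuinely larger than the tube, so one cannot simply bound $r_n(\delta,B_n(y,\beta))$ by $r_n(\delta,S_\beta(y))$. I would close this gap with a uniform bound proved by contradiction and compactness: for any $a>h^*_f(\beta)$ I claim there exist $\delta_0>0$ and $N$ such that $r_n(\delta,B_n(y,\beta))\le e^{an}$ for all $y\in Y$, all $n\ge N$, and all $\delta\le\delta_0$. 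If this failed, one would extract $\delta_k\to0$, $n_k\to\infty$ and centers $y_k$ with $r_{n_k}(\delta_k,B_{n_k}(y_k,\beta))>e^{an_k}$; after re-centering the time window and passing to a subsequence with $y_k\to y^*$ (using compactness of $Y$), the finitely many closed constraints defining the Bowen balls pass in the limit to the bi-infinite constraints defining a tube, and the upper semicontinuity of $y\mapsto S_\beta(y)$ then forces $h_{top}(f,S_\beta(y^*))\ge a$, contradicting $a>h^*_f(\beta)=\sup_y h_{top}(f,S_\beta(y))$. The delicate points are the diagonal extraction needed to turn finitely many closed conditions into the bi-infinite intersection, and the bookkeeping that keeps the separation constant $\delta$ controlled while the tube radius $\beta$ is not enlarged.

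Granting this uniform bound, I would finish as follows. For each $a>h^*_f(\beta)$ the counting estimate gives $r_n(\delta,Y)\le r_n(\beta,Y)\,e^{an}$ for $n\ge N$ and $\delta\le\delta_0$, hence $h_{top}(f,\delta)\le h_{top}(f,\beta)+a$; letting $a\downarrow h^*_f(\beta)$ and then $\delta\to0$ yields $h_{top}(f)=\lim_{\delta\to0}h_{top}(f,\delta)\le h_{top}(f,\beta)+h^*_f(\beta)$, the asserted inequality. For the ``in particular'' statement, observe that $h_{top}(f,\varepsilon)$ is nondecreasing as $\varepsilon\downarrow0$, so that $h_{top}(f)=\sup_{\varepsilon>0}h_{top}(f,\varepsilon)\ge h_{top}(f,\beta)$; together with the main inequality and $h^*_f(\beta)=0$ this forces $h_{top}(f)=h_{top}(f,\beta)$.
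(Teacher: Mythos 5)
The paper itself gives no proof of this theorem --- it is quoted from Bowen's \emph{Entropy-expansive maps} --- so your attempt has to be measured against Bowen's original argument. Your first reduction, $r_n(\delta,Y)\le r_n(\beta,Y)\cdot\max_{y}r_n(\delta,B_n(y,\beta))$, is exactly Bowen's, and you correctly identify the crux: relating the finite-window Bowen ball to the bi-infinite tube. But the way you propose to close that gap does not work. A minor point first: your uniform claim has the quantifiers in the wrong order --- for fixed $n$ and infinite $K$ one has $r_n(\delta,K)\to\infty$ as $\delta\to0$, so no single $N$ can serve for all $\delta\le\delta_0$; this is repairable by letting $N$ depend on $\delta$. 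The serious problem is the contradiction-and-compactness step. It implicitly uses that $K\mapsto h_{top}(f,K)$ behaves upper semicontinuously along decreasing (or Hausdorff-convergent) compact sets, and it does not: for the full $2$-shift with $K_m=\{z\,:\,z_i=0\text{ for }|i|\le m\}$ one has $K_m\downarrow\{\bar 0\}$ while $h_{top}(f,K_m)=\log 2$ for every $m$ and $h_{top}(f,\{\bar 0\})=0$. Moreover, the $e^{an_k}$ separated points you extract live in $B_{n_k}(y_k,\beta)$, whose defining constraints cover only the window $0\le i\le n_k$; these points are completely unconstrained outside that window, they do not accumulate inside any single tube, and their pairwise $d_{n_k}$-separation may be realized at times near the ends of the window where no limiting tube constraint survives. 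So ``the finitely many closed constraints pass in the limit to the bi-infinite constraints'' does not yield $h_{top}(f,S_\beta(y^*))\ge a$, and the claimed uniform bound is not established.

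What is actually needed --- and what Bowen supplies --- is a block-decomposition (subadditivity) argument with constants made uniform by a finite subcover, not a limit argument. Sketch: fix $\gamma>0$; for each $x$ choose a block length $n_x$ with $r_{n_x}(\delta,S_\beta(x))\le e^{n_x(h^*_f(\beta)+\gamma)}$; since the finite-window tubes $S_{\beta,m}(x)=\{z:d(f^iz,f^ix)\le\beta,\ |i|\le m\}$ decrease to $S_\beta(x)$, a spanning set for $S_\beta(x)$ spans $S_{\beta,m_x}(x)$ at a doubled scale for some finite $m_x$, and by continuity the same count works for $S_{\beta,m_x}(y)$ for all $y$ in a neighborhood $U_x$ of $x$. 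Extract a finite subcover $U_{x_1},\dots,U_{x_k}$, put $m^*=\max_i m_{x_i}$, and then span the slightly shrunken ball $B_{[-m^*,n+m^*]}(y,\beta)$ by concatenating, over consecutive time blocks $[t_j,t_j+n_{x_{i_j}}]$ with $f^{t_j}y\in U_{x_{i_j}}$, the spanning sets of the corresponding finite-window tubes; multiplying the block estimates gives $C\,e^{n(h^*_f(\beta)+\gamma)}$, and the covering step must accordingly be run with the enlarged window $[-m^*,n+m^*]$, which costs nothing in the exponential growth rate. This stitching of finitely many fixed-length blocks is the essential idea missing from your proposal; the initial counting reduction and your treatment of the ``in particular'' clause are correct once it is supplied.
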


\begin{proof}[\bf{Proof of the Main Theorem}]
By Theorem~\ref{vieleStreifen} and Theorem~\ref{expansivitaet} we get that $h_{top}(\tilde \phi, \beta)=0$ and $h^*_{\tilde \phi}(\beta)=0$. Applying the inequality of Theorem~\ref{Theo-Bowen} and extending it to continuous flows for $Y=\tilde ST^2$ and $f= \tilde \phi^t$ we conclude that $h_{top}(\tilde \phi) = 0$.
\end{proof}

\end{document}